\definecolor{webgreen}{rgb}{0,.5,0}
\definecolor{webbrown}{rgb}{.6,0,0}
\theoremstyle{plain}
\newtheorem{theorem}{Theorem}
\theoremstyle{definition}
\newtheorem{problem}{Problem}
\theoremstyle{remark}
\title{Prefixes of the Fibonacci word that end with a cube}
\author{Narad Rampersad\footnote{
Department of Math/Stats,
University of Winnipeg,
515 Portage Ave.,
Winnipeg, MB, R3B 2E9
Canada; {\tt narad.rampersad@gmail.com}.}}
\begin{document}
\maketitle
\begin{abstract}
  We study the prefixes of the Fibonacci word that end with a cube.
  Using Walnut we obtain an exact description of the positions of the
  Fibonacci word at which a cube ends.
\end{abstract}

\section{Introduction}
This paper is motivated by the following remarkable result, which was
originally conjectured by Jeffrey Shallit and proved by
Mignosi, Restivo, and Salemi \cite{MRS98}:
\begin{quote}
  \it An infinite word ${\bf w}$ is ultimately periodic if and only if
  all sufficiently long prefixes of ${\bf w}$ end with a repetition of
  exponent at least $\varphi^2$, where $\varphi$ is the golden ratio.
\end{quote}
The \emph{exponent} of a word is the ratio of its length to its
minimal period.  In particular, this result implies that no aperiodic
infinite word can have all sufficiently long prefixes end with a
\emph{cube} (a word with exponent $3$).  Counting the
number of prefixes of an infinite word that end with cubes can
therefore provide a measure, in some sense, of how close the infinite
word is to being ultimately periodic.

The first candidate that one would choose to investigate in regards to
this measure is the \emph{Fibonacci word}.  Indeed, Mignosi et al.\
also proved that the Fibonacci word witnesses the optimality of their
result in the following sense:
\begin{quote}
  \it For any $\epsilon >0$, all sufficiently long prefixes of
  the Fibonacci word
  $${\bf f} = 010010100100101001010010\cdots$$
  end with repetitions of exponent at least $\varphi^2-\epsilon$.
\end{quote}

In this paper we examine the positions at which a cube ends in the
Fibonacci word (the starting positions of cubes in the Fibonacci word
have been characterized by Mousavi, Schaeffer, and
Shallit~\cite{MSS16}).  Let ${\bf cubes_f}$ be the infinite word whose
$n$-th term is
\[
  \begin{cases}1 \text{ if a cube ends at position $n$ of ${\bf
        f}$,} \\
    0 \text{ otherwise.}\end{cases}
\]
For any $n\geq 0$, let $(n)_F$ denote the canonical representation of
$n$ in the Fibonacci (Zeckendorf) numeration system.

\begin{theorem}\label{thm:run1}
  There are arbitrarily long runs of $1$'s in ${\bf cubes_f}$.  More
  precisely, the runs of $1$'s in ${\bf cubes_f}$ are characterized by
  the following: If $(i)_F$ has the form
  \[
    (i)_F \in (10)^+0(0+10)(00)^*0w,
  \]
  where $w \in 0(10)^*(\epsilon+1)$ then
  ${\bf cubes_f}$ contains a run of $1$'s of length
  \begin{itemize}
  \item $F_{2n+2}-1$, if $|w|=2n$ for some $n \geq 0$,
  \item $F_{2n+3}-1$, if $|w|=2n+1$ for some $n \geq 0$,
  \end{itemize}
  beginning at position $i$.
\end{theorem}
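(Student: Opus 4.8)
My approach is entirely via Walnut, exploiting that the Fibonacci word $\mathbf{f}$ is Fibonacci-automatic: there is a deterministic finite automaton which, on input $(n)_F$, outputs $\mathbf{f}[n]$. I would first express ``a cube ends at position $n$'' in the first-order logic of $\langle \mathbb{N},+,<,\,n\mapsto\mathbf{f}[n]\rangle$. Taking the cube to occupy the positions $s,s+1,\dots,n$ (with the off-by-one adjusted to the paper's indexing convention), this is
\[
  \mathrm{CUBE}(n)\;\equiv\;\exists p\,\exists s\ \bigl(p\ge 1\,\wedge\,s+3p=n+1\,\wedge\,\forall j\,(j<2p\Rightarrow \mathbf{f}[s+j]=\mathbf{f}[s+j+p])\bigr).
\]
One should note here that ``ends with a cube'' and ``ends with a factor of exponent at least $3$'' cut out the same positions: if the length-$k$ factor ending at position $n$ has minimal period $q$ with $k\ge 3q$, then its length-$3q$ suffix is a cube ending at position $n$. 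Running $\mathrm{CUBE}$ through Walnut produces a DFA accepting $\{(n)_F : \mathrm{CUBE}(n)\}$; this automaton \emph{is} $\mathbf{cubes_f}$.

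Next I would encode ``a maximal run of $1$'s of length exactly $\ell$ begins at position $i$'' as
\[
  \mathrm{RUN}(i,\ell)\;\equiv\;\ell\ge 1\,\wedge\,\bigl(i=0\,\vee\,\neg\,\mathrm{CUBE}(i-1)\bigr)\,\wedge\,\forall j\,(j<\ell\Rightarrow \mathrm{CUBE}(i+j))\,\wedge\,\neg\,\mathrm{CUBE}(i+\ell).
\]
Walnut then yields a synchronized automaton $R$ over the Fibonacci numeration system accepting exactly the pairs $(i,\ell)$ (read digitwise in parallel, the shorter representation left-padded with zeros) for which $\mathrm{RUN}(i,\ell)$ holds.

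On the other side I would build an automaton $P$ for the set named in the statement. The condition $(i)_F\in(10)^+0(0+10)(00)^*0w$ with $w\in 0(10)^*(\epsilon+1)$ is regular. The observation that makes the \emph{length} relation automatic is the identity $|(\ell)_F|=|w|$: one checks that $F_{2n+2}-1$ has Fibonacci representation $(10)^n$ and $F_{2n+3}-1$ has representation $(10)^n1$, so in both the even case $|w|=2n$ and the odd case $|w|=2n+1$ the claimed $\ell$ is precisely the integer whose Fibonacci representation is the alternating block $1010\cdots$ of the same length as $w$. Hence (since $(i)_F$ is strictly longer than $(\ell)_F$, so $(\ell)_F$ is left-padded with exactly the digits of $(i)_F$ preceding $w$) a product-alphabet automaton $P$, reading $(i)_F$ and $(\ell)_F$ in parallel, can (i) verify the $i$-track lies in the stated regular language, (ii) check the $\ell$-track is all zeros over the prefix of $(i)_F$ preceding $w$, and (iii) check that over the $w$-block the $\ell$-track spells the alternating string starting with $1$. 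I would then have Walnut verify
\[
  \forall i\,\forall\ell\ \bigl(\mathrm{RUN}(i,\ell)\;\Longleftrightarrow\;P(i,\ell)\bigr),
\]
which is exactly the asserted characterization. For arbitrarily long runs: $w=0(10)^{n-1}1$ (resp.\ $w=0(10)^n$) occurs as a suffix of some admissible $(i)_F$ for every $n$, so the forward direction forces runs of length $F_{2n+2}-1$ (resp.\ $F_{2n+3}-1$), which is unbounded.

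The main obstacle I anticipate is not the Walnut runs themselves—the Fibonacci word is small and well behaved—but getting the two hand-built ingredients exactly right: pinning down the endpoint and period conventions in $\mathrm{CUBE}(n)$, together with the ``cube'' versus ``exponent $\ge 3$'' equivalence, and, more delicately, encoding the dependence of $\ell$ on the suffix $w$ of $(i)_F$ as a single synchronized automaton, which is precisely what the identity $|(\ell)_F|=|w|$ is there to enable. Once $R$ and $P$ are correctly constructed the equivalence is a finite check; should it fail, the failure is an explicit pair $(i,\ell)$ that pins down exactly how the description must be amended (for instance, restricting the claim to sufficiently large $i$).
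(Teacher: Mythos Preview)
Your approach is essentially the paper's: define the cube-ending predicate and the maximal-run predicate in first-order logic, feed them to Walnut to obtain the synchronized automaton $R$ for pairs $(i,\ell)$, and then translate $(\ell)_F\in\{(10)^n,(10)^n1\}$ into $F_{2n+2}-1$ and $F_{2n+3}-1$ via the standard Fibonacci telescoping identities. The one difference is that the paper simply inspects the computed automaton and reads off the regular description of the accepted $(i,\ell)$ pairs, whereas you propose to build the target automaton $P$ explicitly and have Walnut certify $R\Leftrightarrow P$; your route is a bit more work up front but makes the match fully machine-checked rather than relying on a by-eye reading of Figure~\ref{fib_end_cubes_run}.
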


\begin{theorem}\label{thm:run0}
  The runs of $0$'s in ${\bf cubes_f}$ have lengths $1$, $2$, $3$, $7$, $8$,
  and $13$.  The only run of length $13$ occurs at the beginning of
  ${\bf cubes_f}$.  For each of the other lengths ($1$, $2$, $3$, $7$, and
  $8$), there are infinitely many runs of that length in ${\bf cubes_f}$.
\end{theorem}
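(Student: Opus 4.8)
The plan is to carry out the verification with Walnut, using that ${\bf cubes_f}$ is a Fibonacci-automatic sequence. The statement ``a cube ends at position $n$ of ${\bf f}$'' is expressed, for a suitable indexing convention, by a first-order formula of the form
\[
  \exists i\ \exists p\ \bigl(\,p\ge 1\ \wedge\ i+3p=n+1\ \wedge\ \forall j\,(\,j<2p \implies {\bf f}[i+j]={\bf f}[i+j+p]\,)\,\bigr),
\]
which is interpretable in $\langle\,\mathbb{N},+,<,\ n\mapsto{\bf f}[n]\,\rangle$ with Zeckendorf representations; Walnut therefore produces a finite automaton that, on input $(n)_F$, outputs the $n$-th letter of ${\bf cubes_f}$ (this is the automaton used in the proof of Theorem~\ref{thm:run1}). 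From it I would form a predicate $\mathrm{run}_\ell(n)$ meaning ``the maximal run of $0$'s starting at position $n$ has length exactly $\ell$'', namely ${\bf cubes_f}[n+j]=0$ for all $j<\ell$, and ${\bf cubes_f}[n+\ell]=1$, and ($n=0$ or ${\bf cubes_f}[n-1]=1$).

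First I would bound the possible run lengths: Walnut, asked to decide $\exists n\,\forall j\,(\,j<14 \implies {\bf cubes_f}[n+j]=0\,)$, returns \textsc{false}, so no run of $0$'s has length $\ge 14$ and every maximal run has length at most $13$. Then, for each $\ell\in\{1,2,\dots,13\}$, I would ask Walnut to decide $\exists n\,\mathrm{run}_\ell(n)$; the answer is \textsc{true} exactly for $\ell\in\{1,2,3,7,8,13\}$ and \textsc{false} for the other seven values, which determines the set of attained lengths.

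For the claim about length $13$, I would have Walnut confirm both $\mathrm{run}_{13}(0)$ and $\forall n\,(\,\mathrm{run}_{13}(n) \implies n=0\,)$, so the unique run of that length occurs at the beginning of ${\bf cubes_f}$. Finally, to show that each of the lengths $1,2,3,7,8$ occurs infinitely often, I would have Walnut check, for each such $\ell$, the sentence $\forall i\,\exists j\,(\,j>i \wedge \mathrm{run}_\ell(j)\,)$; each returns \textsc{true}, which is exactly the statement that the (automatic) set of starting positions of length-$\ell$ runs is infinite. This completes the argument.

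The main obstacle is not any individual step but getting the setup right: the indexing convention for ``ends at position $n$'' must be applied consistently across every formula --- the boundary behaviour near $n=0$ is precisely what gives the initial run its anomalous length $13$ --- and the automaton for ${\bf cubes_f}$, together with the product constructions underlying $\mathrm{run}_\ell$, must be kept small enough for Walnut to terminate; for the Fibonacci word the relevant state counts stay modest, so no extra trick is needed.
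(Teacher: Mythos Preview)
Your proposal is correct and follows essentially the same Walnut-based approach as the paper: define the maximal-run-of-$0$'s predicate and decide the relevant first-order sentences over the Fibonacci numeration system. The only cosmetic difference is that the paper builds a single two-input automaton for pairs $(i,\ell)$ and projects onto the $\ell$-component to read off the set $\{1,2,3,7,8,13\}$ at once, whereas you first bound $\ell\le 13$ and then test each value individually; your explicit verification that the unique length-$13$ run sits at position~$0$ is in fact slightly more thorough than the paper's own proof, which only records that the ``infinitely many'' test fails for $\ell=13$.
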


The proofs of these theorems are given in the next section.

\section{Walnut computations}
Our main results are all obtained by computer using Walnut
\cite{Walnut}.  We begin with the command
\begin{verbatim}
eval fib_end_cubes "?msd_fib Ei En n > 1 & j = i+3*n-1 &
    (Ak k < 2*n => F[i+k] = F[i+k+n])":
\end{verbatim}
which produces the automaton in Figure~\ref{fib_end_cubes}, which
accepts the Zeckendorf representations of the positions at which a cube
ends in ${\bf f}$.
  \begin{figure}[htb]
    \centering
    \includegraphics[scale=0.375]{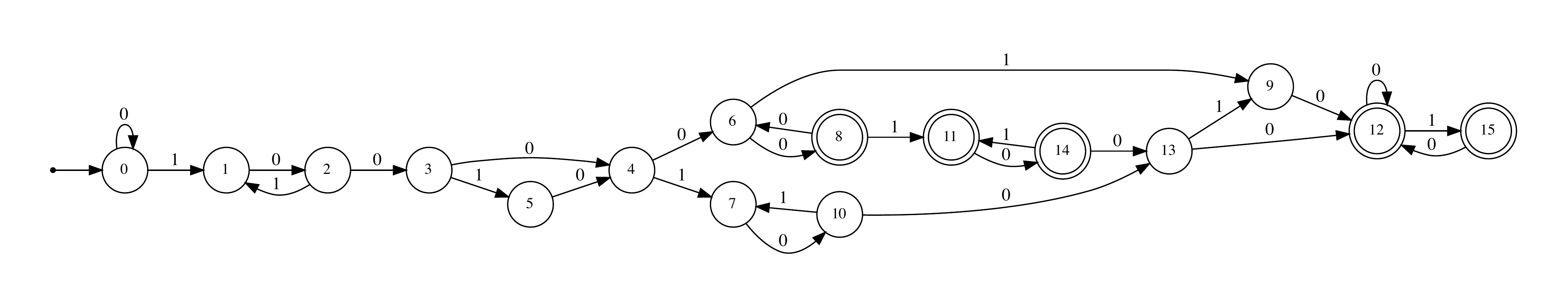}
    \caption{Automaton for ending positions of cubes in ${\bf f}$}\label{fib_end_cubes}
  \end{figure}

\begin{proof}[(Proof of Theorem~\ref{thm:run1}.)]
To determine the lengths of the runs of $1$'s in ${\bf cubes_f}$, we use the
command
\begin{verbatim}
eval fib_end_cubes_run "?msd_fib n>=1 & (At t<n =>
    $fib_end_cubes(i+t)) & ~$fib_end_cubes(i+n) &
    (i=0|~$fib_end_cubes(i-1))":
\end{verbatim}
which produces the automaton in Figure~\ref{fib_end_cubes_run}, which
accepts the Zeckendorf representations of pairs $(i,\ell)$ such that
there is a run of $1$'s in ${\bf cubes_f}$ of length $\ell$ starting at
position $i$.
  \begin{figure}[htb]
    \centering
    \includegraphics[scale=0.5]{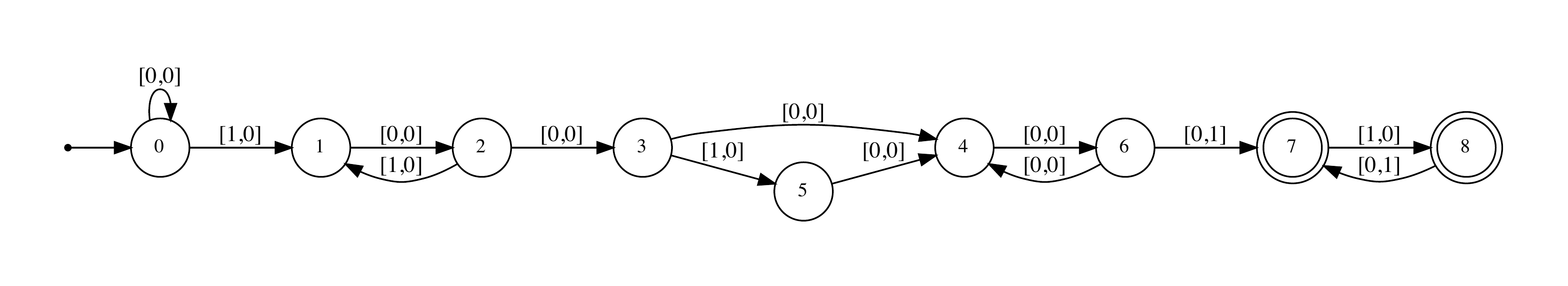}
    \caption{Automaton for runs of $1$'s in ${\bf cubes_f}$}\label{fib_end_cubes_run}
  \end{figure}

By examining the structure of this automaton we see that for an accepted
pair $(i,\ell)$, the representation $(i)_F$ has the form $(i)_F =
(10)^+0(0+10)(00)^*0w$, where $w \in 0(10)^*(\epsilon+1)$.
Furthermore, if $|w|=2n$, then $(\ell)_F = (10)^n$ and if $|w|=2n+1$,
then $(\ell)_F = (10)^n1$.  Now, let $F_m$ denote the $m$-th Fibonacci
number and recall the identities:
\[
\sum_{j=0}^{n-1}F_{2j+1} = F_{2n} \quad\text{ and }\quad
\sum_{j=1}^n F_{2j} = F_{2n+1} - 1.
\]
Hence, if $|w|=2n$, we have
$$\ell = \sum_{j=1}^n F_{2j+1} = F_{2n+1} + F_{2n} - F_1 =
F_{2n+2}-1$$
and if $|w|=2n+1$ we have
$$\ell = \sum_{j=1}^{n+1} F_{2j} = F_{2n+2} + F_{2n+1} - 1 =
F_{2n+3}-1.$$
\end{proof}

\begin{proof}[(Proof of Theorem~\ref{thm:run0}.)]
To determine the lengths of the runs of $0$'s in ${\bf cubes_f}$, we use the
command
\begin{verbatim}
eval fib_no_cubes_run "?msd_fib n>=1 & (At t<n =>
    ~$fib_end_cubes(i+t)) & $fib_end_cubes(i+n) &
    (i=0|$fib_end_cubes(i-1))":
\end{verbatim}
which produces the automaton in Figure~\ref{fib_no_cubes_run}, which
accepts the Zeckendorf representations of pairs $(i,\ell)$ such that
there is a run of $0$'s in ${\bf cubes_f}$ of length $\ell$ starting at
position $i$.
  \begin{figure}[htb]
    \centering
    \includegraphics[scale=0.35]{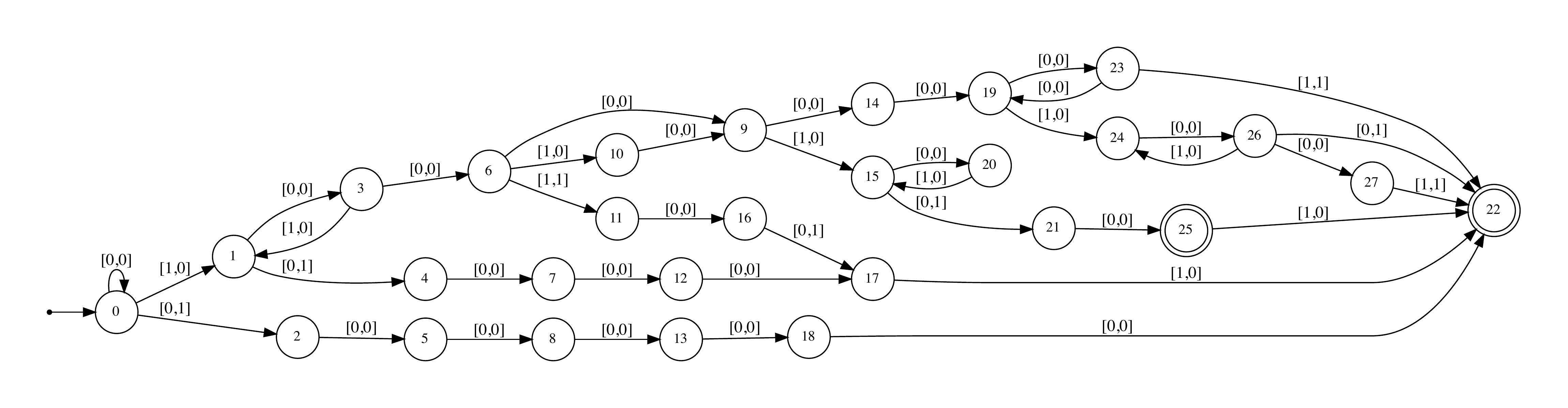}
    \caption{Automaton for runs of $0$'s in ${\bf cubes_f}$}\label{fib_no_cubes_run}
  \end{figure}

We can project this automaton onto the second component of its input
with the command
\begin{verbatim}
eval fib_no_cubes_run_length "?msd_fib Ei $fib_no_cubes_run(i,n)":
\end{verbatim}
which produces the automaton in Figure~\ref{fib_no_cubes_run_length}.
  \begin{figure}[htb]
    \centering
    \includegraphics[scale=0.5]{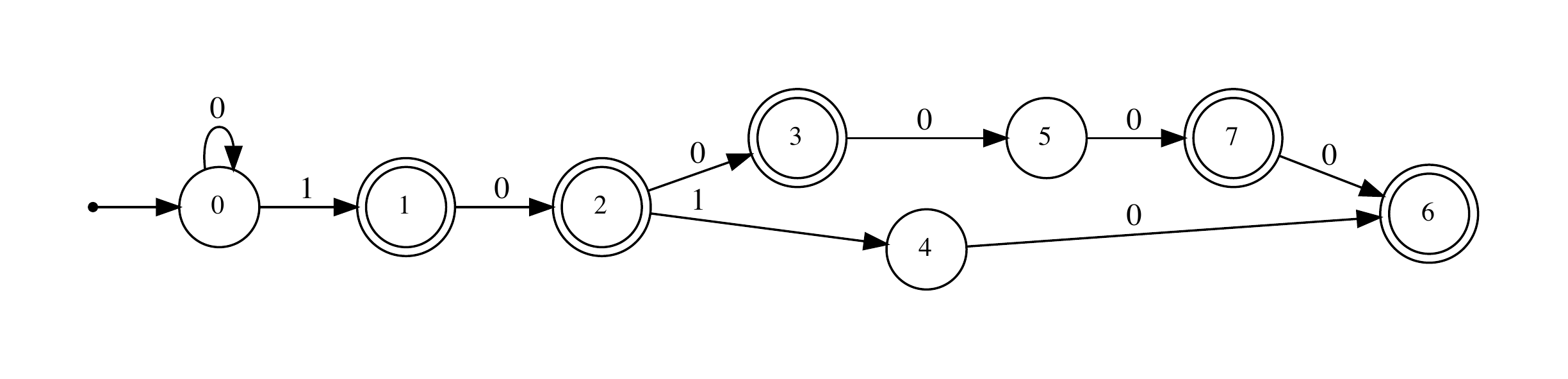}
    \caption{Automaton for lengths of runs of $0$'s in ${\bf cubes_f}$}\label{fib_no_cubes_run_length}
  \end{figure}
We see that the only possible run lengths are $\ell \in
\{1,2,3,7,8,13\}$.

The command
\begin{verbatim}
eval tmp "?msd_fib Ai Ej j>i & $fib_no_cubes_run(j,1)":
\end{verbatim}
evaluates to TRUE, indicating that there are infinitely many runs of
$0$'s of length $1$.  This is also the case for run lengths $2$, $3$, $7$,
and $8$.  For length $13$ however, we get a result of FALSE.
\end{proof}

The positions of the runs of length $7$ and $8$ have a simple
structure, so we describe these next.

\begin{theorem}\
  \begin{itemize}
  \item The runs of $0$'s in ${\bf cubes_f}$ of length $8$ begin at
    positions $i$ where $(i)_F \in (10)^+0001$.
  \item The runs of $0$'s in ${\bf cubes_f}$ of length $7$ begin at
    positions $i$ where $(i)_F \in (10)^+01001$.
  \end{itemize}
\end{theorem}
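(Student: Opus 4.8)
The plan is to reuse the Walnut machinery already set up for Theorems~\ref{thm:run1} and~\ref{thm:run0}. The automaton \texttt{fib\_no\_cubes\_run} accepts the Fibonacci representations of pairs $(i,\ell)$ for which a run of $0$'s of length $\ell$ begins at position $i$, so to isolate the length-$8$ runs I would simply fix the second coordinate:
\begin{verbatim}
eval fib_no_cubes_run_8 "?msd_fib $fib_no_cubes_run(i,8)":
eval fib_no_cubes_run_7 "?msd_fib $fib_no_cubes_run(i,7)":
\end{verbatim}
This produces automata $A_8$ and $A_7$ that accept, respectively, $\{(i)_F : \text{a run of }0\text{'s of length }8\text{ starts at }i\}$ and the analogous set for length $7$. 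Walnut converts the numeric constants $7,8$ to \texttt{msd\_fib} form automatically, so no manual encoding is needed here.

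Next I would check that $A_8$ accepts exactly $(10)^+0001$ and $A_7$ exactly $(10)^+01001$. The cleanest route is to define these languages in Walnut and verify the two equivalences by machine:
\begin{verbatim}
reg lang8 msd_fib "(10)+0001":
reg lang7 msd_fib "(10)+01001":
eval check8 "?msd_fib Ai $fib_no_cubes_run(i,8) <=> $lang8(i)":
eval check7 "?msd_fib Ai $fib_no_cubes_run(i,7) <=> $lang7(i)":
\end{verbatim}
both of which should return TRUE; alternatively one reads the structure of the minimized $A_7,A_8$ directly off their transition diagrams, exactly as the structure of \texttt{fib\_end\_cubes\_run} was read off in the proof of Theorem~\ref{thm:run1}. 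A small consistency check worth noting is that the two proposed expressions do describe genuine canonical Zeckendorf representations: each of $(10)^+0001$ and $(10)^+01001$ begins with $1$ and contains no factor $11$, so nothing is spuriously excluded from, or duplicated in, the comparison. The use of $(10)^+$ (rather than $(10)^*$) is also consistent with the fact, established in Theorem~\ref{thm:run0}, that the sole very short initial block of $0$'s has length $13$, not $7$ or $8$.

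I do not expect a genuine obstacle: all the combinatorics of where cubes end is already packaged inside \texttt{fib\_end\_cubes} and hence \texttt{fib\_no\_cubes\_run}, and the present statement is just a human-readable description of two fibres of that automaton. The only points needing care are bookkeeping ones — writing the \texttt{reg} patterns in Walnut's concrete syntax for $(10)^+$, and making sure $A_7,A_8$ and \texttt{lang7},\texttt{lang8} live over the same digit alphabet and \texttt{msd\_fib} ordering so that the equivalence checks are meaningful. (One could in principle also prove this combinatorially, by unwinding the self-similar structure of $\mathbf{f}$ under the morphism $0\mapsto 01,\ 1\mapsto 0$ and tracking how runs of non-cube-ending positions are generated, but given the paper's approach the Walnut verification above is the natural proof.)
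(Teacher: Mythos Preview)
Your proposal is correct and follows essentially the same approach as the paper: the paper's proof simply issues the commands \texttt{eval tmp "?msd\_fib \$fib\_no\_cubes\_run(j,8)":} and the analogous one for $7$, then reads the claimed regular expressions off the resulting automata. Your additional step of defining \texttt{lang7}, \texttt{lang8} via \texttt{reg} and verifying the equivalences with a universal \texttt{<=>} check is a nice touch that makes the verification fully mechanical rather than requiring visual inspection, but it is a refinement of the same idea rather than a different route.
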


\begin{proof}
  These are obtained via the commands
\begin{verbatim}
eval tmp "?msd_fib $fib_no_cubes_run(j,8)":
eval tmp "?msd_fib $fib_no_cubes_run(j,7)":
\end{verbatim}
\end{proof}

The descriptions of the starting positions for the other lengths of
runs of $0$'s in ${\bf cubes_f}$ are a little more complicated, so we omit
them here, but the reader can easily compute these with Walnut.

\begin{theorem}
  The density of $0$'s in ${\bf cubes}_{\bf f}$ is zero.
\end{theorem}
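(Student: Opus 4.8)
The plan is to leverage the two structural theorems already proved. Write $Z(N)$ for the number of indices $n<N$ with ${\bf cubes_f}[n]=0$; the assertion is equivalent to $Z(N)/N\to 0$. Since ${\bf cubes_f}$ is a binary word, its maximal runs of $0$'s and of $1$'s alternate, and every maximal run of $0$'s other than the first one is immediately preceded by a maximal run of $1$'s that starts at a strictly smaller position. Hence the number of maximal runs of $0$'s that begin in $[0,N)$ is at most $R_1(N)+1$, where $R_1(N)$ denotes the number of maximal runs of $1$'s that begin in $[0,N)$. By Theorem~\ref{thm:run0} every run of $0$'s has length at most $13$, and every $0$ at a position $<N$ lies in a run that begins at a position $<N$, so
\[
  Z(N)\ \le\ 13\cdot\bigl(R_1(N)+1\bigr).
\]
It therefore suffices to prove $R_1(N)=o(N)$; in fact the bound will be far stronger.

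Next I would invoke the proof of Theorem~\ref{thm:run1}, which shows that every maximal run of $1$'s in ${\bf cubes_f}$ begins at a position $i$ whose canonical Zeckendorf representation lies in the fixed regular language
\[
  L\ =\ (10)^{+}\,0\,(0+10)\,(00)^{*}\,0\cdot 0(10)^{*}(\epsilon+1).
\]
The crucial point is that $L$ is a \emph{bounded} language: it is assembled from single letters by concatenation, a finite union, and only three Kleene stars, and such languages are well known to have polynomially bounded growth. Concretely, a word of $L$ of length $m$ is specified by three nonnegative integers whose doubles sum to $m$ up to a bounded additive constant, together with finitely many discrete choices, so there are $O(m^{2})$ such words of each length and $O(m^{3})$ of length at most $m$. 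Since the canonical Zeckendorf representation of an integer $i$ has length $O(\log i)$, only $O\bigl((\log N)^{3}\bigr)$ integers $i<N$ satisfy $(i)_F\in L$, and hence $R_1(N)=O\bigl((\log N)^{3}\bigr)$.

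Combining this with the first display yields $Z(N)=O\bigl((\log N)^{3}\bigr)=o(N)$, so the density of $0$'s in ${\bf cubes_f}$ is indeed $0$, and in fact decays at least as fast as $(\log N)^{3}/N$. The only substantive step is the middle one: recognizing that the set of starting positions of runs of $1$'s furnished by Theorem~\ref{thm:run1} is sparse precisely because its Zeckendorf representations form a bounded regular language and therefore grow only polynomially in their length; everything else is routine bookkeeping about binary words. An alternative route, closer to the automaton-theoretic flavor of the rest of the paper, would be to have Walnut build a linear representation for the function counting the positions $<N$ at which a run of $0$'s begins (projecting \texttt{fib\_no\_cubes\_run} onto its first coordinate) and to check that the dominant eigenvalue of the associated transition matrix is strictly smaller than $\varphi$; but the bounded run length from Theorem~\ref{thm:run0} makes the elementary estimate above the shortest path.
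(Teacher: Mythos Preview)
Your argument is correct, and it reaches the same conclusion with the same underlying mechanism (a regular set of Zeckendorf representations with polynomial growth yields a set of integers of density zero), but the route you take differs from the paper's. The paper proceeds directly: it has Walnut build the automaton accepting exactly those $j$ with ${\bf cubes_f}[j]=0$, then observes that this automaton has no pair of cycles that can each reach the other, and invokes the standard fact that such automata accept only polynomially many words of each length. You instead recycle Theorems~\ref{thm:run1} and~\ref{thm:run0}: bounded run-lengths of $0$'s from Theorem~\ref{thm:run0} reduce the count of $0$'s to a count of runs, and the explicit description of the starting positions of runs of $1$'s from Theorem~\ref{thm:run1} lets you read off by hand that the relevant Zeckendorf language is bounded (three independent stars, hence $O(m^2)$ words of each length). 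Your approach has the virtue of needing no new automaton and of making the $O((\log N)^3)$ bound on $Z(N)$ completely explicit; the paper's approach is shorter and avoids the detour through runs of $1$'s, but leaves the passage from ``polynomially many strings of length $n$'' to ``density zero'' implicit. The eigenvalue alternative you sketch at the end is also sound, and indeed the paper uses exactly that method later for a different example, though for the Fibonacci case it prefers the cycle-structure criterion.
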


\begin{proof}
We examine the complement of the automaton in
Figure~\ref{fib_end_cubes}.  The Walnut command
\begin{verbatim}
eval fib_no_end_cubes "?msd_fib ~$fib_end_cubes(j)":
\end{verbatim}
produces the automaton in Figure~\ref{fib_no_end_cubes},
which gives the positions in ${\bf f}$ where no cube ends.
  \begin{figure}[htb]
    \centering
    \includegraphics[scale=0.375]{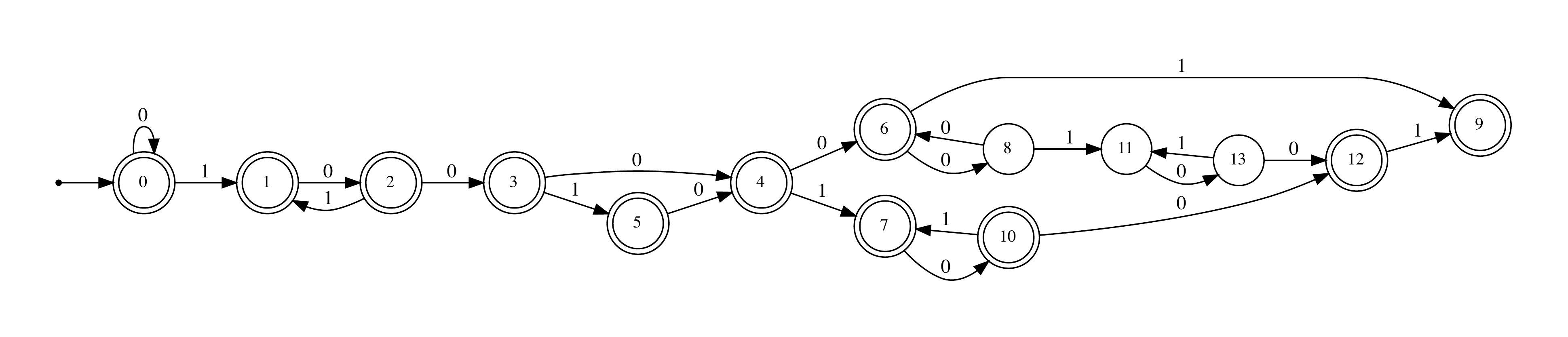}
    \caption{Automaton for positions in ${\bf f}$ where no cube ends}\label{fib_no_end_cubes}
  \end{figure}
  To complete the proof, it suffices to show that there are only
  polynomially many strings of length
  $n$ that are accepted by this automaton.  This can be seen directly
  from the structure of the automaton: since this automaton does not
  have two cycles that can both mutually reach each other, we can
  conclude that the number of strings of length
  $n$ accepted by this automaton is polynomially bounded (see, for
  example, \cite{GKRS08}).
\end{proof}

\section{Other Sturmian words}
Although the Fibonacci word is ``optimal'' with respect to the result
of Mignosi et al.\ mentioned in the Introduction, some computer
calculations suggest that there may be other Sturmian words that have
even more prefixes that end with cubes than the Fibonacci word.

For any infinite word ${\bf w}$, let us define ${\bf cubes}_{\bf w}$
to be the binary word whose $n$-th term is $1$ if
${\bf cubes}_{\bf w}$ has a cube ending at position $n$, and $0$
otherwise.  Let $\operatorname{max\_no\_cubes}({\bf w})$ denote the
largest $\ell$ such that ${\bf cubes}_{\bf w}$ contains infinitely
many runs of $0$'s of length $\ell$.  Let us also define
$S_{\bf w}(n)$ to be the sum of the first $n$ terms of
${\bf cubes}_{\bf w}$.  That is, $S_{\bf w}(n)$ counts the number of
positions $< n$ at which a cube ends in ${\bf w}$.

Now Theorem~\ref{thm:run0} shows that
$\operatorname{max\_no\_cubes}({\bf f})=8$.  Let ${\bf c}_\alpha$ be
the characteristic Sturmian word with slope $\alpha$.  It is not hard
to find a $\beta$ for which
$\operatorname{max\_no\_cubes}({\bf c}_\beta)=3$. Let
$\beta = (5-\sqrt{13})/6 = [0;4,\overline{3}]$.  Then ${\bf c}_\beta$
is a concatenation of the blocks $00001$ and $0001$, so for any given
position, there is always an occurrence of $000$ ending either at that
position or within the next $3$ positions.  Hence, we have
$\operatorname{max\_no\_cubes}({\bf c}_\beta)=3$.

Computationally, we can examine $S_{{\bf c}_\beta}(n)$ and
$S_{\bf f}(n)$ and compare these two quantities.
Table~\ref{tab:partial_sum} gives some values of these two functions.
Computer calculations show that $S_{{\bf c}_\beta}(n) > S_{\bf f}(n)$
for $2 \leq n \leq 3000$.
\begin{table}[htb!]
  \centering
\begin{tabular}{|c|c|c|}
  \hline
  $n$ & $S_{\bf   f}(n)$ & $S_{{\bf c}_\beta}(n)$ \\
  \hline
  500 & 353 & 408 \\
  1000 & 779 & 860 \\
  2000 & 1722 & 1812 \\
  3000 & 2669 & 2716 \\
  \hline
\end{tabular}
\caption{Comparing $S_{{\bf c}_\beta}(n)$ and  $S_{\bf   f}(n)$}
\label{tab:partial_sum}
\end{table}

We have the following open questions:

\begin{problem}
  Is it possible to determine $\operatorname {max\_no\_cubes}({\bf c}_\alpha)$
  from the continued fraction expansion of $\alpha$?
\end{problem}

\begin{problem}
  What is the least (resp.\ greatest) possible value of $\operatorname{max\_no\_cubes}({\bf
      c}_\alpha)$ over all $\alpha$?  Is it $3$ (resp.~$8$)?
\end{problem}

\begin{problem}
  Is there an $\alpha$ such that for all other $\alpha'$ the function
  $S_{{\bf c}_\alpha}(n)$ is eventually greater than
  $S_{{\bf c}_{\alpha'}}(n)$?
\end{problem}

\begin{problem}
  Can one prove that the density of $0$'s in ${\bf
    cubes}_{\mathbf{c}_\alpha}$ is $0$ for all $\alpha$?
\end{problem}

One might also wish to investigate the relationship between the
critical exponent of an infinite word ${\bf w}$ and the density of
$0$'s in ${\bf cubes}_{\bf w}$.  The critical exponent of ${\bf w}$ is
the quantity
\[
  \sup \{r : {\bf w} \text{ contains a factor with exponent } r\}.
\]
Note that it is easy to construct an aperiodic word with unbounded
critical exponent for which ``almost all'' positions are the ending
position of a cube: for example, the infinite word
\[
  010^210^410^810^{16}10^{32}1\cdots
\]
has this property.  So it is natural to restrict our attention to
words with bounded critical exponent.  The Fibonacci word has critical
exponent $2+\varphi \approx 3.618$, and all Sturmian words have
critical exponent at least this large.  Are there words ${\bf w}$ with
lower critical exponent for which the density of $0$'s in
${\bf cubes}_{\bf w}$ is still $0$?  The answer is ``yes''.  For
instance, the fixed point ${\bf x}$ (starting with $0$) of the
morphism $0 \to 0001$, $1 \to 1011$ has critical exponent
$10/3$~\cite[p.~99]{Krie08}, and just as we did for the Fibonacci
word, we can use Walnut to show that the density of $0$'s in
${\bf cubes}_{\bf x}$ is $0$ (after computing the automaton for the
$0$'s in ${\bf cubes}_{\bf x}$, one computes the eigenvalues of the
adjacency matrix and finds that they are all strictly smaller than
$4$).

\begin{problem}
  What is the infimum of the critical exponents among all infinite
  words ${\bf w}$ for which the density of $0$'s in
  ${\bf cubes}_{\bf w}$ is $0$?  Is it $3$?
\end{problem}

\section*{Acknowledgments}
The idea for this work came from discussions with James Currie.  We
thank him for those stimulating conversations.

\end{document}